\newtheorem{theorem}{Theorem}[subsection]
\newtheorem{lemma}[theorem]{Lemma}
\newcommand{\qed}{\rule{1mm}{3mm}}     
\newenvironment{proof}{\vspace*{\parsep}\noindent {\bf proof:}}{\qed\\[1em]}
\begin{document}

\title{Independence Results around Constructive ZF}
\author{Robert S. Lubarsky}
\maketitle

\section{Introduction}

When making the transition from a classical to an intuitionistic
system, one is compelled to reconsider the axioms chosen. For
instance, it would be natural to re-formulate those that imply
Excluded Middle. In the context of set theory, typically the Axiom
of Foundation is substituted by the Axiom of Set Induction, the
two being equivalent classically but the latter not yielding
Excluded Middle intuitionistically. Even axioms that do not return
classical logic must be re-evaluated, as classically equivalent
variants may not remain so in a different logic. An example of
this is Collection, which, over the other ZF axioms, is equivalent
to Replacement and to Reflection, but not over intuitionistic ZF
(see \cite{FS}).

A different reason to re-evaluate axioms is to take more seriously
the program of constructivism. IZF is basically ZF with classical
logic replaced by intuitionistic. Since, though, the axioms of ZF
were never meant to be constructive, they're still not even with a
different logic. This issue was addressed by Myhill (see
\cite{my}), who re-formulated by the axioms and framework of ZF to
come up with CST (Constructive Set Theory). Among the changes he
made was to eliminate the Power Set Axiom as non-constructive; as
an alternative, he included Exponentiation, the hypothesis of the
existence of the set of functions from a set A to a set B, as
adequately strong to do most of ordinary mathematics yet still
sufficiently constructive.

Later on, Aczel (\cite{ac1}, see also \cite{ac2} and \cite{ac3})
picked up the thread and re-cast CST in a more conventional
framework, christening this revamped version CZF (Constructive
ZF). He also provided a constructive justification for the
enterprise, by interpreting CZF in Martin-L\"of type theory
(\cite{ml1}, \cite{ml2}). In so doing, he uncovered a principle
apparently stronger than Exponentiation, Subset Collection. Subset
Collection is most easily understood via an equivalent, Fullness.
A set C of total relations from A to B is said to be full if every
total relation from A to B can be thinned to one in C. The
Fullness Axiom says that every pair of sets has a full set of
relations. This principle is valid in Aczel's interpretation, and
so is included in CZF.

This all leads to asking what the exact relationship among these
axioms actually is. It is easily seen that Power Set implies
Subset Collection, which itself implies Exponentiation. What about
the reverse implications? SC was chosen over Exponentiation
because it's presumably stronger, and Power Set not because it's
presumably not Martin-L\"of interpretable. Presumably. In this
paper, we show that the presumed non-implications are indeed
correct. In the next section, it is show that SC does not imply
Power Set, and in the one after that, that Exponentiation does not
imply SC. In the final section, we observe that a result about
Reflection is a by-product of the earlier work.

In fact, although these questions came up in the context of CZF,
slightly more than independence over the remaining CZF axioms is
shown. The other difference between CZF and IZF is that CZF
contains Restricted Separation (i.e. Separation for $\Delta_{0}$
definable properties), whereas IZF has full Separation. The
independece results below include full Separation, and so are
actually independence results over IZF - Power Set.

For the record, the axioms of IZF are: Empty, Infinity, Pair,
Union, Extensionality, Set Induction, Power Set, Separation, and
Collection. CZF consists of Empty, Strong Infinity, Pair, Union,
Extensionality, Set Induction, Subset Collection, $\Delta_{0}$
Separation, and Strong Collection. Strong Infinity is a variant of
Infinity that more directly produces $\omega$ without resort to
Power Set or full Separation. Similarly, Strong Collection is a
variant of Collection useful in the absence of full Separation.
Collection asserts the existence of a bounding set (for a total
function on a set defined by a formula). In the presence of
Separation, one can take those elements of a postulated bounding
set that come from something in the domain. Strong Collection is
the assertion that there is a bounding set consisting only of
range values for something in the domain.

\section{Subset Collection does not imply Power Set}
We will provide a Kripke model of CZF + $\neg$ Power Set (within
the meta-theory ZF). Furthermore, full Separation will hold.

Given a partial order {\it P}, the full Kripke model {\bf M} on
{\it P} is defined inductively on $\alpha \in$ ORD as follows. At
node $\sigma$, the universe {\bf M}$^{\alpha}_{\sigma}$ consists
of those functions f with domain {\it P}$_{\geq \sigma}$ such that
for $\tau \geq \sigma$ f($\tau$) $\subseteq \bigcup_{\beta <
\alpha}$ {\bf M}$^{\beta}_{\tau}$ and for $\rho \geq \tau$
f$_{\tau\rho}$"f($\tau$) $\subseteq$ f($\rho$) (where
f$_{\tau\rho}$ is the transition function from node $\tau$ to node
$\rho$). f$_{\sigma\tau}$ is then extended from $\bigcup_{\beta <
\alpha}$ {\bf M}$^{\beta}_{\sigma}$ to {\bf M}$^{\alpha}_{\sigma}$
as restriction: f$_{\sigma\tau}$(f) = f $\uparrow$ {\it P}$_{\geq
\tau}$. The universe at $\sigma$, {\bf M}$_{\sigma}$, is defined
as $\bigcup_{\alpha \in ORD}$ {\bf M}$^{\alpha}_{\sigma}.$ Then
$\in$ and = are as easy as pie to define: $\sigma \vdash f \in g$
iff $f \in g(\sigma)$, and $\sigma \vdash f = g$ if f = g.

Equivalently, one could
 simplify the inductive generation of the Kripke sets and the transition functions, at the
 expense of $\in$ and =, by having the Kripke sets have domain all of {\it P}:
 {\bf M}$^{\alpha} = \{ f : {\it P} \rightarrow \bigcup_{\beta < \alpha}$ {\bf M}$^{\beta} \mid f$ is
 monotonically non-decreasing (i.e. $\sigma < \tau \rightarrow f(\sigma) \subseteq f(\tau)) \}$.
 Then {\bf M}$_{\sigma}$ is just $\bigcup_{\alpha \in ORD}$ {\bf M}$^{\alpha}$ (which, note, is independent
 of $\sigma$), and the transition functions are just the identity. The complication pops up again when
 defining $\in$ and =, since at $\sigma$ all nodes not extending $\sigma$ are unimportant but still in
 the domains and hence need to be actively disregarded: $\sigma \vdash f \in g$ if
 $\exists h \in g(\sigma) \; \sigma \vdash f = h$, and $\sigma \vdash f = g$ if $\forall
 \tau \geq \sigma \; [\forall h \in f(\tau) \; \tau \vdash h \in g(\tau) \; \wedge \; \forall h \in g(\tau)
 \; \tau \vdash h \in
 f(\tau)]$.

The model we ultimately want to consider differs from that above in that the Kripke sets are to be, as functions, eventually constant. There are several ways this could be achieved. The formalism developed below will be to have {\it P} be a class; in particular, ORD. Since every Kripke set is a set, it can give information about only a finite initial segment of {\it P}, after which it is assumed to be constant. Alternatively, {\it P} could be taken to be $\omega$ and the Kripke sets to be eventually hereditarily constant. In fact, this latter approach could be read off from the former by taking a cofinal $\omega$-sequence through ORD (from outside of {\bf V}, naturally) and cutting the full model down to those nodes. (That this works might be construed as a reminder that $\omega$ is still the largest large cardinal of all.) Neither approach seems essentially easier than the other, so, perhaps arbitrarily, we will henceforth deal only with the former.

At this point it would be prudent, instead of formalizing the ideas above in their full generality, to particularize to the partial order of interest. So let {\it P} be ORD. The nodes in {\it P} will typically be referred to with letters from the middle of the Greek alphabet ($\kappa, \lambda, \mu, \nu$ and so on) so as not to cause confusion with the use of $\alpha, \beta,$ and $\gamma$ used in inductions.

Both constructions above can be adapted to this choice of {\it P}. The adaptation of the second construction will be described informally, followed by a thorough description using the first construction, which we remain the basis of this paper. So inductively a Kripke set is a function from {\it P} = ORD to the class of previously defined Kripke sets, which is not only monotonic but also eventually constant: {\bf M}$^{\alpha} = \{ f : ORD \rightarrow \bigcup_{\beta < \alpha}$ {\bf M}$^{\beta} \mid f$ is monotonically non-decreasing and $\exists \kappa \forall \lambda > \kappa \; f(\lambda) = f(\kappa) \}$. Since f($\kappa$) is set-sized, not only is each of its elements eventually constant (inductively), but there is a uniform point of stability: $\exists \kappa \forall g \in TC(f)$ (transitive closure) g is constant beyond $\kappa$. The transition functions, as well as membership and equality, as are above.

To formalize this as stated, an axiomatic theory of classes would be necessary. To work within a more conventional framework, meaning a theory of sets (the framework of this paper being, after all, ZF), an element f will have to be a function on a (necessarily) set-sized initial segment of ORD, beyond which f is implicitly constant. The technical work here is to define the transition functions f$_{\kappa\lambda}$ on f, even when $\lambda \not \in$ dom(f). Since this involves mucking with the domains anyway, it's easier to adapt the former of the constructions above. The reader with a clear intuition who is willing to trust that these matters can be formalized may safely skip to the proposition.

The full Kripke model {\bf M} on {\it P} is defined inductively on $\alpha \in$ ORD as follows. At node $\kappa$, the universe {\bf M}$^{\alpha}_{\kappa}$ consists of those functions f with domain $\lambda \backslash \kappa$ (for some $\lambda > \kappa$) such that for $\mu \in$ dom(f) f($\mu$) $\subseteq \bigcup_{\beta < \alpha}$ {\bf M}$^{\beta}_{\mu}$ and for $\nu \in$ dom(f), $\nu \geq \mu$, f$_{\mu\nu}$"f($\mu$) $\subseteq$ f($\nu$) (where f$_{\mu\nu}$ once again is the transition function from node $\mu$ to node $\nu$). Without loss of generality we can also require that for all $\mu \in$ dom(f) and g $\in$ f($\mu$) dom(f) $\supseteq$ dom(g). (``WLOG" means here that this additional stipulation does not exclude any sets. More precisely, if f satisfies the definition for a set in the Kripke structure except for the condition on the domain, then there is a Kripke set g satisfying the domain condition such that $\kappa \models$ ``f=g". Such a g would be constructed as an end-extension of f, whereby for $\mu \in$ dom(g)$\backslash$dom(f), g($\mu$) would be taken as f$_{\kappa\mu}$(f), using the transition functions about to be defined.)

f$_{\kappa\mu}$ is then extended from $\bigcup_{\beta < \alpha}$ {\bf M}$^{\beta}_{\kappa}$ to {\bf M}$^{\alpha}_{\kappa}$ as follows. If $\mu \in$ dom(f), then f$_{\kappa\mu}$ is restriction: f$_{\kappa\mu}$(f) = f $\uparrow$ dom(f)$\backslash \mu$. If $\mu \not \in$ dom(f), then f$_{\kappa\mu}$(f) will have domain \{$\mu$\} (in the notation from the last paragraph, $\lambda$ will be $\mu + 1$), and (f$_{\kappa\mu}$(f))($\mu$) will be (working inductively here) $\bigcup_{\nu \in dom(f)}$ f$_{\nu\mu}$"f($\nu$). It is left to the reader to show that the transition functions compose as they are supposed to (i.e. f$_{\lambda \mu} \circ$ f$_{\kappa \lambda}$ = f$_{\kappa \mu}$).

The universe at node $\kappa$, {\bf M}$_{\kappa}$, can now be defined as $\bigcup_{\alpha \in ORD}$ {\bf M}$^{\alpha}_{\kappa}.$ The transition functions f$_{\kappa\mu}$ defined along the way on {\bf M}$^{\alpha}_{\kappa}$ cohered, which is why the dependence on $\alpha$ was dropped from the notation; the ultimate f$_{\kappa\mu}$ defined on {\bf M}$_{\kappa}$ is the union of the partial f$_{\kappa\mu}$'s defined along the way.

We will need that the model {\bf M} is definable in {\bf V}, so we may as well check each building block in the development of {\bf M} when it's first introduced. Notice that {\bf M}$^{\alpha}_{\kappa}$ cannot be a function of $\alpha$, since {\bf M}$^{\alpha}_{\kappa}$ is not even a set! However, the relation ``f $\in$ {\bf M}$^{\alpha}_{\kappa}$" (as a relation on f, $\alpha$, and $\kappa$), as well as ``f$_{\kappa\mu}$(f) = g" (as a relation on f, g, $\kappa,$ and $\mu$) is $\Delta_{1}$. This can be shown via a simultaneous $\Sigma_{1}$ induction. The central point in this argument is that the definitions (as given above) contain only bounded quantification and recursive calls.

$\in$ and = must then be defined via a mutual induction to make sure that Extensionality holds in the end. The problem here, which does not come up in the case of a set-sized underlying p.o., is illustrated by the fact that both $\{ \langle 0, \emptyset \rangle \}$ and $\{ \langle 0, \emptyset \rangle, \langle 1, \emptyset \rangle \}$ stand for the empty set yet are not themselves equal as sets. The reader uninterested in such details can safely skip to the main proposition.

Assume that $\in_{\beta}$ and =$_{\beta}$ are defined on {\bf M}$^{\beta}_{\kappa}$ for all nodes $\kappa$ and, inductively, for all stages $\beta < \alpha$. For f, g $\in$ {\bf M}$^{\alpha}_{\kappa}$, $\kappa \models$ ``f $\in_{\alpha}$ g" iff for some h $\in$ g($\kappa$) and $\beta < \alpha, \; \kappa \models$ ``f=$_{\beta}$h". Notice that the sequence of $\in_{\alpha}$s is monotonically non-decreasing, that is, if f $\in_{\alpha}$ g and $\gamma > \alpha$ then f $\in_{\gamma}$ g. This holds because the defining condition of $\in_{\alpha}$ remains true if $\alpha$ is increased. Regarding =, $\kappa \models$ ``f=$_{\alpha}$g" iff for all $\lambda \in$ dom(f) $\cup$ dom(g), h $\in$ (f$_{\kappa \lambda}$(f))($\lambda$), $\lambda \models$ ``h $\in_{\alpha}$ f$_{\kappa \lambda}$(g)", and for all h $\in$ (f$_{\kappa \lambda}$(g))($\lambda$), $\lambda \models$ ``h $\in_{\alpha}$ f$_{\kappa \lambda}$(f)". Notice that the sequence of =$_{\alpha}$s is non-decreasing, because, in the defining condition, $\alpha$ appears only in $\in_{\alpha}$, $\in_{\alpha}$ appears only positively, and $\in_{\alpha}$ is already known to be non-decreasing. Furthermore, $\in_{\alpha}$ and =$_{\alpha}$ are non-increasing for a fixed collection of sets. That is, for $\alpha$ = max(rk f, rk g), $\kappa \models$ ``f $\in_{\alpha}$ g" iff for some $\gamma > \alpha$ $\kappa \models$ ``f $\in_{\gamma}$ g", and similarly for =$_{\alpha}$. This can be proved by a mutual induction. Finally, let $\in$ be $\bigcup_{\beta \in ORD} \; \in_{\beta}$, i.e. $\kappa \models$ ``f $\in$ g" iff for some $\beta \; \kappa \models$ ``f $\in_{\beta}$ g", and similarly for =.

Considering definability again, both $\in$ and = are $\Delta_{1}$-definable ternary relations (i.e. $\kappa \models$ f $\in$ g and $\kappa \models$ f = g). This is based on two facts. First, for $\alpha$ = max(rk f, rk g), $\kappa \models$ ``f $\in$ g" iff $\kappa \models$ ``f $\in_{\alpha}$ g", and similarly for =. Second, $\in_{\alpha}$ and =$_{\alpha}$ are defined by a simultaneous recursion which contains only bounded quantification and recursive calls.

\begin{lemma}
The transition functions respect the primitive relations $\in$ and =.
\end{lemma}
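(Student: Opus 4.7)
The plan is to prove both preservation claims by a simultaneous transfinite induction on the stage $\alpha$ in the stratified definitions $\in_\alpha$ and $=_\alpha$. That is, I would show that for all $\kappa \leq \mu$ and all $f,g$ of rank $\leq \alpha$: if $\kappa \models f \in_\alpha g$ then $\mu \models f_{\kappa\mu}(f) \in_\alpha f_{\kappa\mu}(g)$, and similarly for $=_\alpha$. Since $\in$ and $=$ are the unions of these stratified approximations, the lemma follows. Composition of transition functions (asserted just before the lemma) will be used throughout without comment.

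For the $\in$-clause, suppose $\kappa \models f \in_\alpha g$, so there is some $h \in g(\kappa)$ and $\beta < \alpha$ with $\kappa \models f =_\beta h$. By the inductive hypothesis applied at stage $\beta$, we have $\mu \models f_{\kappa\mu}(f) =_\beta f_{\kappa\mu}(h)$. It therefore suffices to show that $f_{\kappa\mu}(h) \in (f_{\kappa\mu}(g))(\mu)$. If $\mu \in \mathrm{dom}(g)$, the transition is restriction and the clause $f_{\kappa\mu}{}''g(\kappa) \subseteq g(\mu)$ in the definition of a Kripke set gives exactly this. If $\mu \notin \mathrm{dom}(g)$, then by construction $(f_{\kappa\mu}(g))(\mu) = \bigcup_{\nu \in \mathrm{dom}(g)} f_{\nu\mu}{}''g(\nu)$, which certainly contains $f_{\kappa\mu}(h) = f_{\nu\mu}(f_{\kappa\nu}(h))$ for $\nu = \kappa$ (or rather, the smallest element of $\mathrm{dom}(g)$, depending on formalism).

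For the $=$-clause, assume $\kappa \models f =_\alpha g$. We must verify the defining condition for $\mu \models f_{\kappa\mu}(f) =_\alpha f_{\kappa\mu}(g)$: for each $\lambda \in \mathrm{dom}(f_{\kappa\mu}(f)) \cup \mathrm{dom}(f_{\kappa\mu}(g))$ and each $h$ on the left-hand side at $\lambda$, we need $\lambda \models h \in_\alpha f_{\mu\lambda}(f_{\kappa\mu}(g))$, and symmetrically. By composition, $f_{\mu\lambda} \circ f_{\kappa\mu} = f_{\kappa\lambda}$, so the target is just $f_{\kappa\lambda}(g)$, and the domain index $\lambda$ is $\geq \mu \geq \kappa$, hence lies in the range over which the original $=_\alpha$ hypothesis quantifies (one checks that any $\lambda$ in the new domain either already belongs to $\mathrm{dom}(f) \cup \mathrm{dom}(g)$ or sits above it, in which case the eventually-constant/tail behavior of transition functions makes the two clauses equivalent). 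The required membership then follows directly from the $\in_\alpha$-instance of the hypothesis at $\kappa$.

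The main obstacle is the case analysis at $\mu \notin \mathrm{dom}(f) \cup \mathrm{dom}(g)$, where the transition function is no longer restriction but the constructed union; here one must verify that the equality/membership facts at larger nodes $\lambda$ in the quantification pull back correctly through the identity $f_{\kappa\lambda} = f_{\mu\lambda} \circ f_{\kappa\mu}$, and that the asymmetry introduced by having different domains for $f$, $g$, $f_{\kappa\mu}(f)$, and $f_{\kappa\mu}(g)$ does not cause the quantifications to miss anything. Modulo this bookkeeping, every clause reduces mechanically to an inductive-hypothesis application at a strictly smaller stage $\beta < \alpha$.
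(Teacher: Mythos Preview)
Your proposal is correct and follows essentially the same approach as the paper: a simultaneous induction on $\alpha$ showing that the transition functions preserve $\in_\alpha$ and $=_\alpha$, with the $=_\alpha$ case handled by a case split on whether $\mu$ lies in $\mathrm{dom}(f)\cup\mathrm{dom}(g)$ (restriction vs.\ the union definition). The paper merely calls the $\in_\alpha$ case ``trivial'' and leaves the out-of-domain bookkeeping to the reader, whereas you spell both out; but the strategy is the same.
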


\begin{proof}
It suffices to show that the transition functions respect
$\in_{\alpha}$ and =$_{\alpha}$. This can be done by a
simultaneous induction on $\alpha$. The case of $\in_{\alpha}$ is
trivial. To show $\kappa \models$ f=$_{\alpha}$g implies $\mu
\models$ f$_{\kappa \mu}$(f)=$_{\alpha}$f$_{\kappa \mu}$(g), if
$\mu \in$ dom(f) $\cup$ dom(g), then this is trivial, since the
only effect of f$_{\kappa \mu}$ is to shrink the set of
$\lambda$'s that need be considered. Otherwise one has to unravel
the definition of f$_{\kappa \mu}$, which again brings us to
consider those $\lambda$s in dom(f) $\cup$ dom(g). The details are
left to the reader.
\end{proof}

\begin{lemma}
= (as defined above) satisfies the equality axioms.
\end{lemma}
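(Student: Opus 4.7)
The equality axioms to check are reflexivity, symmetry, transitivity, and the two Leibniz substitution axioms for $\in$. Symmetry is immediate, since the clause defining $=_{\alpha}$ is literally symmetric in $f$ and $g$. For the others I would argue by simultaneous induction on the stage $\alpha$ in $=_{\alpha}$ and $\in_{\alpha}$; this is well-founded because the definition of $\in_{\alpha}$ refers only to $=_{\beta}$ for $\beta < \alpha$, while the definition of $=_{\alpha}$ refers to $\in_{\alpha}$ only at elements of strictly smaller rank.

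For reflexivity, I show $\kappa \models f =_{\alpha} f$ whenever $\alpha$ exceeds the rank of $f$. At each $\lambda \in$ dom(f), the required $\lambda \models h \in_{\alpha} f_{\kappa\lambda}(f)$ for $h \in (f_{\kappa\lambda}(f))(\lambda)$ is witnessed by $h$ itself together with the inductive reflexivity $\lambda \models h =_{\mathrm{rk}(h)} h$.

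For Leibniz, specialising the defining clause of $=_{\alpha}$ to $\lambda = \kappa$ yields: $\kappa \models f = g$ implies that for every $h \in f(\kappa)$ there is some $h' \in g(\kappa)$ with $\kappa \models h = h'$ (and conversely). Leibniz on the right ($f = g$ and $z \in f$ give $z \in g$) then follows by unpacking $\kappa \models z \in f$ to obtain $z_{0} \in f(\kappa)$ with $\kappa \models z = z_{0}$, producing $z'_{0} \in g(\kappa)$ with $\kappa \models z_{0} = z'_{0}$, and concatenating via the (inductively available) transitivity. Leibniz on the left is analogous, combining symmetry with transitivity.

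The main obstacle is transitivity, because the defining clause for $=_{\alpha}$ quantifies over $\lambda \in$ dom(f) $\cup$ dom(g), so in passing from $\kappa \models f =_{\alpha} g$ and $\kappa \models g =_{\alpha} h$ to $\kappa \models f =_{\alpha} h$ one must cope with nodes $\lambda$ lying in some but not all of the three domains. My plan is to lift everything to node $\lambda$ via the previous lemma on transition functions and then chase witnesses: given $x \in (f_{\kappa\lambda}(f))(\lambda)$, the first equation produces $y \in (f_{\kappa\lambda}(g))(\lambda)$ with $\lambda \models x =_{\beta} y$, and the second produces $z \in (f_{\kappa\lambda}(h))(\lambda)$ with $\lambda \models y =_{\gamma} z$; the inductive transitivity at stage $\max(\beta,\gamma) < \alpha$ yields $\lambda \models x = z$ and hence $\lambda \models x \in_{\alpha} f_{\kappa\lambda}(h)$. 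The uniform definition of $(f_{\kappa\lambda}(\cdot))(\lambda)$ irrespective of whether $\lambda$ lies in the source domain ensures that the argument goes through at every $\lambda \in$ dom(f) $\cup$ dom(h), including those not in dom(g).
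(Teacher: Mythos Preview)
Your proposal is correct and, for reflexivity, symmetry, and the two Leibniz axioms, matches the paper's argument essentially verbatim (the paper likewise handles items 4 and 5 by unpacking $\in$ to a witness and then quoting transitivity, so ``inductively available'' can simply be ``already established'').

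The one place where you diverge is transitivity. The paper does not invoke the previous lemma to push $f=_{\alpha}g$ and $g=_{\alpha}h$ up to the node $\lambda$; instead it works with the original equalities at $\kappa$ and uses the identity $(f_{\kappa\lambda}(f))(\lambda)=\bigcup_{\mu\leq\lambda,\,\mu\in\mathrm{dom}(f)} f_{\mu\lambda}\text{``}f(\mu)$ to pull any $i\in (f_{\kappa\lambda}(f))(\lambda)$ back to a pre-image $i_{\mu}$ at some $\mu\in\mathrm{dom}(f)$, then descends further through $\mathrm{dom}(g)$ and $\mathrm{dom}(h)$ to find matching witnesses, and finally pushes forward again with the transition functions. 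Your route---apply the previous lemma once to get $\lambda\models f_{\kappa\lambda}(f)=_{\alpha}f_{\kappa\lambda}(g)$ and $\lambda\models f_{\kappa\lambda}(g)=_{\alpha}f_{\kappa\lambda}(h)$, note that $\lambda$ automatically lies in the domains of the transitioned sets, and chase witnesses there---is cleaner and more modular, precisely because it reuses the lemma instead of re-doing the domain bookkeeping by hand. The paper's route, on the other hand, is self-contained and makes the combinatorics of the mismatched domains explicit. Either works.
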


\begin{proof}
1. 0 $\models \forall x \; x=x$: Assume inductively that for all x
$\in$ {\bf M}$^{\beta}_{\kappa}$ ($\beta < \alpha$) $\kappa
\models$ x=$_{\beta}$x and that f $\in$ {\bf
M}$^{\alpha}_{\kappa}$. We need to show that $\kappa \models$
f=$_{\alpha}$f, i.e. for all $\lambda \in$ dom(f), h $\in$
(f$_{\kappa \lambda}$(f))($\lambda$), $\lambda \models$ ``h
$\in_{\alpha}$ f$_{\kappa \lambda}$(f)". Since $\lambda \in$
dom(f), f$_{\kappa \lambda}$ operates via restriction, so the
condition to be proved simplifies to for all $\lambda \in$ dom(f),
h $\in$ f($\lambda$), $\lambda \models$ ``h $\in_{\alpha}$
f$_{\kappa \lambda}$(f)". Given such a $\lambda$ and h, by the
definition of $\in_{\alpha}$, we need a g $\in$ f($\lambda$) and
$\beta$ so that $\lambda \models$ ``h=$_{\beta}$g". Letting g be h
and $\beta$ be rk(h), the inductive hypothesis gives exactly this.

2. 0 $\models \forall x,y \; x=y \rightarrow y=x$: Trivial,
because the definition of $\in_{\alpha}$ was symmetric in f and g.

3. 0 $\models \forall x,y,z \; x=y \wedge y=z \rightarrow x=z$: As
in 1, the transitivity of =$_{\alpha}$ is proven inductively on
$\alpha$. The technical difficulty here is that we must consider
the domains of arbitrary Kripke sets f, g, and h, which might all
differ. Possibly the easiest way to do this is to start with the
observation that, for arbitrary $\lambda \geq \kappa$, (f$_{\kappa
\lambda}$(f))($\lambda$) = $\bigcup_{\mu \leq \lambda, \mu \in
dom(f)}$ f$_{\mu \lambda}$"f($\mu$). So when evaluating $\kappa
\models$ f=$_{\alpha}$h, after being led to consider i $\in$
(f$_{\kappa \lambda}$(f))($\lambda$), we can conclude that i $\in$
f$_{\mu \lambda}$"f($\mu$) for some $\mu \in$ dom(f). Using a
pre-image i$_{\mu}$ of i in f($\mu$) and the postulated equality
of f and g, one finds a $\nu \in$ dom(g), $\nu \leq \mu$, a
pre-image i$_{\nu}$ of i$_{\mu}$, and a j$_{\nu} \in$ g(${\nu}$)
such that i$_{\nu}$ =$_{\beta}$ j$_{\nu}$. Then one finds a $\xi
\leq \nu$, j$_{\xi} \in$ g(${\xi}$), k$_{\xi} \in$ h($\xi$) such
that j$_{\xi}$ =$_{\gamma}$ k$_{\xi}$. At this point one uses the
monotonicity of = to move up to $\delta$ = max($\beta, \gamma$),
the inductively assumed transitivity of =$_{\delta}$ to get that
i$_{\xi}$ =$_{\delta}$ k$_{\xi}$, and the fact that the transition
functions respect the primitive relations to conclude $\lambda
\models$ ``i $\in_{\alpha}$ h". The details are left to the
reader.

4. 0 $\models \forall x,y,z \; x=y \wedge x \in z \rightarrow y
\in z$: If $\kappa \models$ ``f=g $\wedge$ f $\in$ h", then
$\kappa \models$ ``f=i" for some i $\in$ h($\kappa$). By the
transitivity of =, $\kappa \models$ ``g=i".

5. 0 $\models \forall x,y,z \; x=y \wedge z \in x \rightarrow z
\in y$: If $\kappa \models$ ``f=g $\wedge$ h $\in$ f", then for
some i $\in$ f($\kappa$) $\kappa \models$ ``h=i", and also $\kappa
\models$ ``i $\in$ g". That means that for some j $\in$
g($\kappa$) $\kappa \models$ ``i=j". By the transitivity of =,
$\kappa \models$ ``h=j", and hence also ``h $\in$ g".

\end{proof}

With $\in$ and = defined, the description of the model {\bf M} is
complete. The definability of {\bf M} follows automatically from
the work above. Since the universes at each node and the primitive
relations (there being no primitive functions) are definable, the
definability of the satisfaction relation $\kappa \models \phi$
comes directly from the inductive determination of satisfaction in
Kripke models. Furthermore, while we make no claim along the lines
of $\Sigma_{n}$-satisfaction being $\Sigma_{n}$ definable, it is
true that $\Delta_{0}$-satisfaction (i.e. $\kappa \models \phi,
\phi$ a $\Delta_{0}$ formula) is $\Delta_{1}$ definable, as
follows. In the inducive definition of satisfaction in Kripke
models (when restricting to $\Delta_{0}$ formulas), the only times
an unbounded quantifier is introduced are for the clauses
$\forall$ and $\rightarrow$, when the collection of nodes is
quantified over. Such quantification can be eliminated by
restricting it to the maximum $\kappa$ of the domains of the
parameters, because the formula is true at $\kappa$ iff it's true
at any later node, this last fact being provable inductively. (A
different proof, which shows the same fact without the restriction
of $\phi$ being $\Delta_{0}$, is that there is an isomorphism
between the Kripke model from $\kappa$ on and the model from
$\lambda$ on, where $\lambda$ is any ordinal $> \kappa$. This
latter argument is given in detail in the proof of Separation
later in this section.)

What remains is to show that {\bf M} has the desired properties.

\begin{theorem}
{\bf M} $\models$ CZF + $\neg$ Power Set
\end{theorem}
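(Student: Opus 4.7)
The plan is to check each axiom of CZF together with full Separation in $\mathbf{M}$, and then exhibit a refutation of Power Set. Most axioms fall out of standard Kripke-model arguments unaffected by either the eventually-constant constraint or the passage from a set-sized to a class-sized $P$. Empty is witnessed by the Kripke set with value $\emptyset$ at every node; Pair and Union are built by fixing the obvious value at $\kappa$ and extending via the transition functions; Strong Infinity is witnessed by the canonical name for $\omega$; Extensionality is immediate from the clause defining $=$; Set Induction is proved by induction on the $\mathbf{V}$-rank of Kripke sets, using Foundation in $\mathbf{V}$; and Strong Collection is obtained by applying Collection in $\mathbf{V}$ to the class of available Kripke witnesses and bundling them into a single Kripke set using the eventually-constant structure.

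The first delicate case is full Separation. Given a formula $\phi$, a parameter Kripke set $f$, and a node $\kappa$, the natural candidate is the Kripke set $s$ with $\mathrm{dom}(s) = \mathrm{dom}(f)$ and $s(\lambda) = \{h \in (f_{\kappa\lambda}(f))(\lambda) : \lambda \models \phi(h)\}$. Its definability in $\mathbf{V}$ is the concern, since $\phi$ is unrestricted. Here I would invoke the isomorphism flagged in the preamble: $\mathbf{M}$ restricted to nodes $\geq \kappa$ is isomorphic, via a shift of the domain initial segments, to $\mathbf{M}$ restricted to nodes $\geq \lambda$ for any $\lambda > \kappa$. This reduces the evaluation of $\phi$ uniformly in $\lambda \geq \kappa$ to evaluation on a single fixed structure, so $s$ is set-definable in $\mathbf{V}$. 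Monotonicity of $s$ in $\lambda$ and compatibility with the transition functions then follow from Lemma~1.

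Subset Collection is the main obstacle. Given $f, g$ at $\kappa$, I plan to use Power Set and Collection in $\mathbf{V}$ to bound the $\mathbf{V}$-rank of witnesses: there is an ordinal $\alpha$ such that every total relation from $f$ to $g$ in $\mathbf{M}$ admits a thinning to a total relation whose Kripke code has rank below $\alpha$. The witness $C$ is then the Kripke set whose value at $\lambda \geq \kappa$ is the set of all rank-below-$\alpha$ codes of total relations from $f_{\kappa\lambda}(f)$ to $f_{\kappa\lambda}(g)$. Fullness is verified by extracting, from any given total relation $R$, a rank-bounded thinning using Strong Collection inside $\mathbf{M}$. The real effort is in the combinatorics of simultaneously meeting monotonicity, transition-respect, and a uniform external rank bound; this is where I expect the bulk of the bookkeeping to lie.

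Finally, Power Set fails at node $0$. For each ordinal $\nu$, let $p_\nu$ be the Kripke subset of $2$ with $p_\nu(\lambda) = \emptyset$ for $\lambda < \nu$ and $p_\nu(\lambda) = \{\hat 0\}$ for $\lambda \geq \nu$, where $\hat 0$ is the canonical name for $0$. Then $0 \models p_\nu \subseteq 2$; but for $\nu < \nu'$, one has $\nu \models \hat 0 \in p_\nu$ while $\nu \not\models \hat 0 \in p_{\nu'}$, so the $p_\nu$ are pairwise $=$-distinct at node $0$. Any putative Kripke set $Q$ with $0 \models$ ``$Q = \mathcal{P}(2)$'' would have to contain an $=$-representative of every $p_\nu$ in $Q(0)$, forcing the set $Q(0) \in \mathbf{V}$ to meet proper-class many pairwise $=$-inequivalent Kripke sets --- impossible. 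Hence Power Set fails, completing the verification.
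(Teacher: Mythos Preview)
Your overall plan matches the paper's, and the treatments of the easier axioms, full Separation, Strong Collection, and $\neg$ Power Set are essentially the same (the paper uses subsets of $1$ rather than $2$, but the idea is identical). One minor slip: in Separation you set $\mathrm{dom}(s)=\mathrm{dom}(f)$, but the domain must also absorb the domains of $\phi$'s other parameters before the shift isomorphism can be applied beyond it.

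The genuine gap is in Subset Collection. Your claim that every total relation from $f$ to $g$ admits a thinning whose Kripke code has rank below a fixed $\alpha$ cannot hold as stated: a relation presented at a node $\mu$ has any thinning also living at node $\mu$, so that thinning's code has rank at least $\mu$, which is unbounded. Invoking Strong Collection \emph{inside} $\mathbf{M}$ only produces an internal thinning and gives no control over the $\mathbf{V}$-rank of its code, so it cannot place the thinning into a previously fixed set $C$; and ``Power Set plus Collection in $\mathbf{V}$'' does not apply, since the class of total relations over all nodes is proper. What is missing is precisely the stabilization/pull-back step that makes the paper's argument work: choose $\lambda$ beyond the domains of $f$ and $g$; since neither grows past $\lambda$, for any $\mu>\lambda$ the pairs in $R(\mu)$ are $f_{\lambda\mu}$-images of pairs available already at node $\lambda$, and pulling back yields a total relation $z$ with $\mathrm{dom}(z)=\{\lambda\}$ and $\mu\models f_{\lambda\mu}(z)\subseteq R$. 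This is the very same eventually-constant isomorphism you correctly invoke for Separation; applying it here is what lets the full set have bounded domain $\lambda+1\setminus\kappa$ and still be full at every later node. Without this step your rank bound has no justification and the argument does not close.
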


\begin{proof}
The simpler axioms of CZF are Pairing, Union, and Infinity, and
the proofs that they hold in {\bf M} are left to the reader. What
we will show are Extensionality, Set Induction, Restricted
Separation, Subset Collection, Strong Collection, and, of course
$\neg$ Power Set. As promised above, we will go beyond CZF and
show full Separation.

Extensionality: What needs to be shown is $\forall x \; \forall y
\; (x=y \leftrightarrow \forall z \; (z \in x \leftrightarrow z
\in y))$. The implication $\rightarrow$ follows easily from
equality property 5. in the previous proposition. For the
implication $\leftarrow$, given sets f and g and node $\kappa$,
let $\alpha$ = max(rk f, rk g). To show that f=$_{\alpha}$g is
precisely, by definition, $\forall \lambda \in$ dom(f) $\cup$
dom(g), h $\in$ (f$_{\kappa \lambda}$(f))($\lambda$), $\lambda
\models $ ``h $\in_{\alpha}$ f$_{\kappa \lambda}$(g)", and vice
versa. So let h be a member of (f$_{\kappa
\lambda}$(f))($\lambda$). By assumption, we have $\kappa \models
``\forall z \; (z \in f \leftrightarrow z \in g))"$. This holds in
particular for h: $\lambda \models $ ``h $\in (f_{\kappa
\lambda}$(f)) $\leftrightarrow$ h $\in$ (f$_{\kappa
\lambda}$(g))". In order to use this, we need that $\lambda
\models $ ``h $\in$ (f$_{\kappa \lambda}$(f))", when what we have
is merely h $\in$ (f$_{\kappa \lambda}$(f))($\lambda$). Unraveling
the definition of $\lambda \models x \in_{\beta} y$ and making the
obvious choices along we way, this reduces to showing that
$\lambda \models $ h=$_{\beta}$h for some $\beta$. Letting $\beta$
be rk(h), this follows from the proof (albeit not the statement)
of the first equality axiom (previous proposition). Hence we can
conclude that $\lambda \models$ ``h $\in$ (f$_{\kappa
\lambda}$(f)) $\leftrightarrow$ h $\in$ (f$_{\kappa
\lambda}$(g))". What we still need is that $\lambda \models $ ``h
$\in_{\alpha}$ f$_{\kappa \lambda}$(g)". That this is the case was
already observed while defining $\in_{\alpha}$ and =$_{\alpha}$.
The other direction (``vice versa") is symmetric in f and g. The
details are left to the reader.

Set Induction: This is easy, because the Kripke sets were defined inductively. That is, given an appropriate property $\phi(x)$, one would show that $\phi(x)$ holds for all members of {\bf M}$^{\alpha}$ inductively on $\alpha$.

Restricted Separation: In fact, full Separation holds. Let $\kappa$ be a node, $a$ a set in {\bf M}$_{\kappa}$, and $\phi$ a formula (with free variable $x$) over (i.e. with parameters from) {\bf M}$_{\kappa}$. Let $\lambda > \kappa$ include not only dom($a$) but also the domains of all of $\phi$'s parameters. For $\mu > \kappa$ we will use the notation f$_{\kappa\mu}(\phi)(x)$ to mean $\phi(x$, f$_{\kappa\mu}(b_{0})$, ... , f$_{\kappa\mu}(b_{n-1}))$, where the $b_{i}$'s list all of $\phi$'s parameters. In {\bf V}, let Sep$_{a, \phi}$ be a function with domain $\lambda + 1 \backslash \kappa$ such that for all $\mu$ in this domain
$$Sep_{a, \phi}(\mu) = \{ x \in (f_{\kappa\mu}(a))(\mu) \mid \mu \models f_{\kappa\mu}(\phi)(x)\}.
$$
This function Sep$_{a, \phi}$ is definable in {\bf V} because {\bf M} is, and it should be clear that for all $\mu \in$ dom(Sep$_{a, \phi}$), $x \in$ {\bf M}$_{\mu}$,
$$\mu \models ``x \in f_{\kappa\mu}(Sep_{a, \phi}) \leftrightarrow x \in f_{\kappa\mu}(a) \wedge f_{\kappa\mu}(\phi)(x)".
$$
We need only show why the same holds also for $\mu > \lambda$. The idea is that beyond $\lambda$ nothing relevant (i.e. $a$ or $\phi$) is changing anymore, and so Sep$_{a, \phi}$ need not change either. In detail, for any such $\mu$, by the choice of $\lambda$, f$_{\lambda\mu}$ is an isomorphism on f$_{\kappa\lambda}(a)$, f$_{\kappa\lambda}(\phi)$, and f$_{\kappa\lambda}$(Sep$_{a, \phi}$), the nature of which is replacing the domains in the transitive closures of the sets in question, which are all \{$\lambda$\}, with \{$\mu$\}. This isomorphism can be extended to one between {\bf M}$_{\lambda}$ and  {\bf M}$_{\mu}$ by hereditarily replacing ordinals in the domains, necessarily of the form $\lambda + \xi$, with $\mu + \xi$. Under this isomorphism, \{$x \in {\bf M}_{\lambda} \mid \lambda \models ``x \in f_{\kappa\lambda}(a) \wedge f_{\kappa\lambda}(\phi)(x)"$\}, i.e. Sep$_{a, \phi}$($\lambda$),  gets mapped to \{$x \in {\bf M}_{\mu} \mid \mu \models ``x \in f_{\lambda\mu}(f_{\kappa\lambda}(a)) \wedge f_{\lambda\mu}(f_{\kappa\lambda}(\phi))(x)"$\}. By the composition property of transition functions (f$_{\sigma\tau} \circ$ f$_{\rho\sigma}$ = f$_{\rho\tau}$, a necessary condition on all Kripke models), we have the desired result.

Subset Collection: Let $\kappa \in$ ORD, $a, b \in {\bf M}_{\kappa}$. Let $\lambda \supseteq$ dom($a$), dom($b$). SubColl$_{a,b}$ will have domain $\lambda + 1 \backslash \kappa$. For all $\mu$ in this domain, let
$$SubColl_{a,b}(\mu) = \{ x \in {\bf M}_{\mu} \mid \mu \models ``x \; is \; a \; total \; relation \; from \; a \; to \; b"\}.
$$
It is immediate that for all $\mu \in$ dom(SubColl$_{a,b}$), $x \in$ {\bf M}$_{\mu}$,
$$\mu \models ``x \in f_{\kappa\mu}(SubColl_{a,b}) \leftrightarrow x \; is \; a \; total \; relation \; from \; a \; to \; b".
$$
For $\mu > \lambda$, the idea is that from $\lambda$ to $\mu$ neither $a$ nor $b$ has changed. So if node $\mu$ forces $x$ to be a total relation from $a$ to $b$, then any ordered pairs that might enter $x$ at a later node can be disregarded and $x$ would still remain total, since $a$ doesn't grow any more. Furthermore, the ordered pairs at node $\mu$ can be ``pulled back" to node $\lambda$, since neither $a$ nor $b$ grew from $\lambda$ to $\mu$. This (possibly truncated) version of $x$ was put into SubColl$_{a,b}$ by at latest node $\lambda$, and is indeed a total sub-relation of $x$.

In more detail, observe that for any such $\mu$, by the choice of $\lambda$, f$_{\lambda\mu}$ is an isomorphism on f$_{\kappa\lambda}(a)$ and f$_{\kappa\lambda}(b)$. Let $\mu \models ``x$ is a total relation from f$_{\kappa\mu}(a)$ to f$_{\kappa\mu}(b)$". Then $x(\mu)$ is a total relation from (f$_{\kappa\mu}(a)$)($\mu$) to (f$_{\kappa\mu}(b)$)($\mu$). This induces via the isomorphism a total relation from (f$_{\kappa\lambda}(a)$)($\lambda$) to (f$_{\kappa\lambda}(b)$)($\lambda$); let $z$ be the function with domain \{$\lambda$\} and $z(\lambda)$ being this relation. $\lambda \models ``z$ is total" since $a$ doesn't change beyond $\lambda$, so $\lambda \models ``z \in$ SubColl$_{a,b}$". By the construction of $z$, $\mu \models f_{\lambda\mu}(z) \subseteq x$.

Strong Collection: Let $\kappa$ be a node, $a$ a set in {\bf M}$_{\kappa}$, and $\phi$ a formula (with free variable $x$) over (i.e. with parameters from) {\bf M}$_{\kappa}$, such that $\kappa \models ``\forall x \in a \; \exists y \; \phi(x,y)$". Let $\lambda > \kappa$ include not only dom($a$) but also the domains of all of $\phi$'s parameters. As above, the notation f$_{\kappa\mu}(\phi)(x)$ means $\phi(x$, f$_{\kappa\mu}(b_{0})$, ... , f$_{\kappa\mu}(b_{n-1}))$, where the $b_{i}$'s list all of $\phi$'s parameters. In {\bf V}, a function StrColl$_{a, \phi}$ with domain $\lambda + 1 \backslash \kappa$ can be constructed so that for all $\mu$ and $\nu$ in this domain with $\mu < \nu$,
$$\forall x \in a(\mu) \; \exists y \in StrColl_{a, \phi}(\mu) \; \mu \models \phi(x,y),
$$
$$\forall y \in StrColl_{a, \phi}(\mu) \; \exists x \in a(\mu) \; \mu \models \phi(x,y),
$$
and f$_{\mu\nu}$"StrColl$_{a, \phi}(\mu) \subseteq$ StrColl$_{a, \phi}(\nu)$.
StrColl$_{a, \phi}$ clearly acts as a bounding set at all nodes through $\lambda$. Beyond $\lambda$ the idea is that nothing relevant (i.e. $a$ or $\phi$) is changing anymore, and so StrColl$_{a, \phi}$need not change either. The argument is essentially the same as that for Restricted Separation, and is left to the reader.

$\neg$ Power Set: For $\kappa \in$ ORD, let 1$_{\kappa}$ be the Kripke set with domain $\kappa + 1$ such that for $\lambda < \kappa \; 1_{\kappa}(\lambda) = \emptyset$ and $1_{\kappa}(\kappa) = \langle \kappa, \emptyset \rangle$. $0 \models ``1_{\kappa} \subseteq 1 =_{def} \{0\}"$, but by the set-sized nature of the members of {\bf M}, no set contains all of the 1$_{\kappa}$'s. Therefore 1 has no power set in {\bf M}.
\end{proof}

\section{Exponentiation does not imply Subset Collection}
We will provide a Kripke model of CZF - Subset Collection + Exponentiation + $\neg$ Subset Collection. Furthermore, full Separation will hold.

The model to be built in this section is an extension of that from the previous section, and ultimately can be defined only in a generic extension of {\bf V}. These sets, though, will be named by terms in {\bf V} and so we will work in {\bf V} at first to identify the terms. When the terms are interpreted, they will be functions just as in the last section, and so their domains will also be given during their inductive generation.

The terms are defined inductively on the ordinals. At stage 0, at any node $\kappa$, there is a term f$_{0\kappa}(G_{\lambda})$, which will be interpreted as a function with domain $[\kappa,$ max($\kappa, \lambda)]$. (For $\kappa$ = 0, these will be referred to more simply as $G_{\lambda}$.) (Of course, in this context the symbol ``f$_{0\kappa}$" is just a piece of syntax. It should, however, remind you of the transition function from node 0 to $\kappa$. In an attempt to reduce notation, we will use the same notation for a term as for its interpretation, hence the double meaning of ``f$_{0\kappa}$".) At any later stage, if $a$ is a term at node $\kappa$, then so is Union($a$) (with the same domain); if $a$ is a term at node $\kappa$, $\phi$ is a $\Delta_{0}$ formula (in which all of the parameters are terms at node $\kappa$), and $\lambda > \kappa$, then Sep$_{a,\phi,\lambda}$ is a term at node $\kappa$ with domain $[\kappa, \lambda)$; and if a function is build via the inductive construction of the previous section (given at the bottom of p. 3), then it is also a term in which the node and domain of definition are explicit.

Concerning the interpretation of these terms, we introduce some notation. For an integer n and node $\kappa$, n$_{\kappa}^{\bf M}$ is the internal version of n; that is, 0$_{\kappa}^{\bf M}$ = $\langle \kappa, \emptyset \rangle$ and n+1$_{\kappa}^{\bf M}$ = $\langle \kappa, n_{\kappa}^{\bf M} \cup \{n_{\kappa}^{\bf M}\} \rangle$. Let {\bf N}$_{\kappa}^{\bf M}$ be \{n$_{\kappa}^{\bf M} \mid$ n $\in {\bf N}$\}. (Notice that {\bf N}$_{\kappa}^{\bf M}$ is not actually a set {\bf M}; if we had instead wanted to define the internal version of the natural numbers, say, that would be $\langle \kappa, {\bf N}_{\kappa}^{\bf M} \rangle$ (or a variant thereof).)

Let G be a generic relation on {\bf N}. That is, G is generic over {\bf V} for the forcing partial order which consists of functions from finite subsets of {\bf N} $\times$ {\bf N} to 2; p extends q (written ``p $\geq$ q") if p extends q as a function, i.e. p $\supseteq$ q. A generic can be identified with a total function F : {\bf N} $\times$ {\bf N} $\rightarrow$ 2; letting G be \{ $\langle m, n \rangle \mid$ F($\langle m, n \rangle$) = 1 \}, G is then a generic relation on {\bf N}. Notice that by genericity, both G and G$^{-1}$ are total.

In {\bf V}[G], let G$_{\kappa}^{\bf M}$ be \{ $\langle m_{\kappa}^{\bf M}, n_{\kappa}^{\bf M} \rangle^{\bf M} \mid \langle m, n \rangle \in$ G \} (where $\langle ,\rangle^{\bf M}$ means the {\bf M}-internal ordered pair function) . Similarly, let ({\bf N}$\times${\bf N})$_{\kappa}^{\bf M}$ be \{ $\langle m_{\kappa}^{\bf M}, n_{\kappa}^{\bf M} \rangle^{\bf M} \mid m, n \in {\bf N}_{\kappa}^{\bf M}$  \}.

Using the G$_{\kappa}^{\bf M}$s, we can now provide an interpretation for the terms. (Remember, we are using the same notation for the interpretation of a term as for the term itself; whether a term or its interpretation is meant should be clear from the context.) For f$_{0\kappa}(G_{\lambda})$, f$_{0\kappa}(G_{\lambda})(\mu)$ = G$_{\mu}^{\bf M}$ for $\mu < \lambda$ (the internal version of G), and f$_{0\kappa}(G_{\lambda})(\lambda)$ = ({\bf N}$\times${\bf N})$_{\kappa}^{\bf M}$ otherwise (the full relation on {\bf N}). Union($a$)($\kappa$) (where $\kappa \in$ dom($a$)) is $\bigcup_{f \in a(\kappa)} f(\kappa)$. The interpretation of Sep$_{a,\phi,\lambda}$ is similar to that of Sep$_{a,\phi}$ in the previous section (See the proof of Restricted Separation), only there an essentially arbitrary $\lambda$ was chosen (the only requirement being that $\lambda$ be big enough), whereas here the $\lambda$ in question is a part of the term and the size restriction is dropped. So for $\mu \in [\kappa, \lambda),$
$$Sep_{a,\phi,\lambda}(\mu) = \{ x \in (f_{\kappa\mu}(a))(\mu) \mid \mu \models f_{\kappa\mu}(\phi)(x)\}.
$$
The interpretation of a function is straightforward: mod the above mentioned identification of terms with their interpretations, there is literally nothing to do.

Of course, to have a Kripke model, we need some other facts and definitions; indeed, the interpretation above is dependent upon some of these, which would therefore need to be proven and defined simultaneously with this inductive interpretation. That is, we need to define transition functions f$_{\kappa\mu}$, show that they cohere (i.e. f$_{\lambda \mu} \circ$ f$_{\kappa \lambda}$ = f$_{\kappa \mu}$), and show that the Kripke sets are non-decreasing (i.e. if $\kappa \models a \in b$ then $\mu \models $f$_{\kappa\mu}(a) \in $ f$_{\kappa\mu}(b)).$ We will leave the details to the reader, and content ourselves with one observation. Since $\lambda$ was pointedly left out of the domain of Sep$_{a,\phi,\lambda}$, for all $\mu \geq \lambda$ including $\lambda$ itself
$$Sep_{a,\phi,\lambda}(\mu) = \bigcup_{\nu \in [\kappa, lambda)} f_{\nu\mu}"Sep_{a,\phi,\lambda}(\nu).
$$

Notice that {\bf M} (i.e. $\kappa \models \phi$) is definable in {\bf V}[G]. In particular, as in the previous section, $\Delta_{0}$-satisfaction is $\Delta_{1}$. Furthermore, since G is set-generic over {\bf V}, by general forcing technology, p $\mid\vdash ``\kappa \models \phi"$ is definable in {\bf V}, and, in particular, p $\mid\vdash ``\kappa \models \phi", \; \phi \;     \Delta_{0},$ is $\Delta_{1}$.

\begin{theorem}
{\bf M} $\models$ CZF - Subset Collection + Exponentiation + $\neg$ Subset Collection
\end{theorem}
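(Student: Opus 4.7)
My plan decomposes the work into positive verifications---the CZF axioms other than Subset Collection, plus Exponentiation---and the negative verification $\neg$ Subset Collection, with the latter being by far the harder step.

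For the positive CZF axioms, the model here is essentially the Section~2 Kripke structure enriched with the primitive terms $G_\lambda$, $\mathrm{Sep}_{a,\phi,\lambda}$, and $\mathrm{Union}$, so Extensionality, Pairing, Union, Infinity, Set Induction, and Strong Collection can be carried over almost verbatim, once I check that the new terms fit the Kripke-set formalism (monotonic, eventually constant, $\Delta_1$-definable over $\mathbf{V}[G]$). Full Separation is essentially built in: given a set $a$ and formula $\phi$ at $\kappa$, choose $\lambda$ larger than the domains of $a$ and of all parameters of $\phi$, and take $\mathrm{Sep}_{a,\phi,\lambda}$ as the separated set. On $[\kappa,\lambda)$ this is correct by definition of the interpretation; at nodes $\mu \geq \lambda$ correctness follows from the identity
\[
\mathrm{Sep}_{a,\phi,\lambda}(\mu) = \bigcup_{\nu \in [\kappa,\lambda)} f_{\nu\mu}\mbox{"}\mathrm{Sep}_{a,\phi,\lambda}(\nu)
\]
recorded just before the theorem statement, together with the isomorphism argument of Section~2, which applies because $a$ and the parameters of $\phi$ are stable past $\lambda$.

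For Exponentiation, I would mimic the Subset Collection argument of Section~2 but restricted to single-valued functions. Given $a, b \in \mathbf{M}_\kappa$, pick $\lambda$ past their domains and set
\[
\mathrm{Exp}_{a,b}(\mu) = \{x \in \mathbf{M}_\mu \mid \mu \models \mbox{``$x$ is a function from $f_{\kappa\mu}(a)$ to $f_{\kappa\mu}(b)$''}\}
\]
for $\mu \in [\kappa, \lambda]$. Below $\lambda$ this is immediate; for $\mu > \lambda$ the same isomorphism trick from Section~2 transfers, because $a$ and $b$ are stable past $\lambda$ and a function at $\mu$ from $f_{\kappa\mu}(a)$ to $f_{\kappa\mu}(b)$ pulls back to a function at $\lambda$. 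The crucial point is that single-valuedness is preserved by the pull-back, whereas totality of a multi-valued relation need not be---this is exactly the asymmetry that makes Exponentiation succeed where Subset Collection will fail.

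The hard part is $\neg$ Subset Collection. My target is the pair $\mathbf{N}, \mathbf{N}$, and the witnesses of non-fullness are the Kripke sets $G_\lambda$. Assume for contradiction that $0 \models \mbox{``$C$ is a full set of total relations from $\mathbf{N}$ to $\mathbf{N}$''}$. Then $C$ is a Kripke set with set-sized domain in $\mathbf{V}[G]$, named by a forcing term in $\mathbf{V}$. By genericity of $G$, each $G_\lambda$ is a total relation at node $0$ from $\mathbf{N}$ to $\mathbf{N}$, so fullness supplies, for every ordinal $\lambda$, some $R_\lambda \in C(0)$ with $0 \models R_\lambda \subseteq G_\lambda$ and $0 \models R_\lambda$ total. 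The design of $G_\lambda$---the generic $G$ at all nodes below $\lambda$, then the full relation $\mathbf{N}\times\mathbf{N}$ at $\lambda$---pins the behavior of $R_\lambda$ below $\lambda$ to a subset of $G$ while leaving it completely free at $\lambda$ and above. Choosing $\lambda$ past every ordinal mentioned in the name for $C$ and in the names for its members, I would split on whether $R_\lambda$ is supported on a bounded initial segment of ORD or genuinely encodes unbounded information about the generic, and extract a contradiction from the finite-support character of the forcing conditions for $G$ together with the boundedness of $C$'s name. Making this dichotomy precise, and extracting the contradiction from genericity, is the step I expect to be the main technical obstacle.
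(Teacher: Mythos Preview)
Your main gap is in Exponentiation. You propose to define $\mathrm{Exp}_{a,b}(\mu)=\{x\in\mathbf{M}_\mu:\mu\models\text{``$x$ is a function from $a$ to $b$''}\}$ and then invoke the Section~2 pull-back. But in this model the Kripke sets are interpretations of \emph{terms in $\mathbf V$}, and which terms name functions depends on $G$; so your $\mathrm{Exp}_{a,b}(\mu)$ is at best a set in $\mathbf V[G]$, not a term in $\mathbf V$, and there is no reason it lies in the model. (The Section~2 argument escaped this only because there the whole model lived in $\mathbf V$.) The paper's argument is entirely different and supplies exactly the missing bound: given any term $R$ with $p\Vdash\text{``}\lambda\models R\text{ is a function from }a\text{ to }b\text{''}$, take the $\Sigma_1$-Skolem hull $H$ of $\{a,b,\kappa,\lambda,R\}\cup\mathrm{TC}(a)\cup\mathrm{TC}(b)\cup\kappa$, Mostowski-collapse via $\rho$, and use $\Delta_1$-absoluteness of the relevant forcing statements to get $p\Vdash\text{``}\lambda\models R\subseteq\rho(R)\text{''}$ with $\rho(R)$ also a function; since a total function cannot be properly extended to a function, $R=\rho(R)$. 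The cardinality bound on $H$ gives a fixed $\alpha$ (independent of $\lambda,R$) with $\rho(R)\in V_\alpha$, so $g(\nu)=\{h\in V_\alpha:h\text{ is a term at node }\nu\}$ is a bounding \emph{term in $\mathbf V$}, and Restricted Separation finishes. The Exp/SubColl asymmetry is not your ``pull-back loses totality'' (it doesn't; $a$ is stable past $\lambda$), but rather that the collapse gives only $R\subseteq\rho(R)$ for relations, and a \emph{super}-relation is useless for Fullness.

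A second gap: you assert full Separation is ``essentially built in'' via $\mathrm{Sep}_{a,\phi,\lambda}$, but those terms exist only for $\Delta_0$ $\phi$. The paper obtains full Separation by a nontrivial Friedman--Scedrov $*$-translation: choose $\mu$ so that $V_\mu[G]$ reflects enough, let $\psi$ be ``$0\in 1_\mu$'' (true exactly from node $\mu$ on), and replace $\phi$ by $\phi^{*f}$, which is $\Delta_0$ and agrees with $\phi$ on $[\kappa,\mu)$ via the lemma $\sigma\models_{\mathbf M}\phi^*\Leftrightarrow\sigma\models_{\mathbf M_{\text{pre-}\psi}}\phi$. The paper explicitly notes that the naive bounding-by-$f$ approach fails (e.g.\ for $\phi\equiv\forall x\forall y\,(x=y\vee x\neq y)$). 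This matters because the paper's route to Strong Collection is Collection $+$ full Separation; your ``carry over from Section~2'' for Strong Collection runs into the same $\mathbf V$-vs-$\mathbf V[G]$ obstacle as Exponentiation, since the direct $\mathrm{StrColl}_{a,\phi}$ would live in $\mathbf V[G]$. For $\neg$SC your outline is in the right direction but the dichotomy you anticipate is unnecessary: since the putative full set $F$ is a term in $\mathbf V$, only set-many $G_\mu$ occur in $\mathrm{TC}(F)$; past $\lambda$ bounding them, each such $G_\mu$ has become $\mathbb N\times\mathbb N$, so $f_{\kappa\lambda}(F)$ is computable in $\mathbf V$, every $R'\in F$ at node $\lambda$ has $R'(\lambda)\in\mathbf V$, and a one-line density argument shows no $\mathbf V$-set is a total subrelation of the generic $G$.
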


\begin{proof}
Extensionality, Set Induction, and Infinity are left to the reader.

Union: This follows from the presence (and meaning) of the terms of the form Union($a$).

Pairing: Given $a, b \in {\bf M}_{\kappa}$, let $\lambda$ include dom($a$) and dom($b$). For $\mu \in \lambda \backslash \kappa$ let f($\mu$) be \{f$_{\kappa\mu}(a)$, f$_{\kappa\mu}(b)$\}, which suffices. Notice that f is definable even if $a$ or $b$ is just a term, so that f exists at least as a term in {\bf V} (even if inductively uninterpretable in {\bf V}).

Restricted Separation: This follows from the presence (and meaning) of the terms of the form Sep$_{a,\phi,\lambda}$, with the proof being essentially that in the previous section. The one change is that the isomorphism given there must be extended to account for terms of the form G$\kappa$ (the other terms already being accounted for inductively). Under this isomorphism, G$_{\lambda+\xi}$ is to be sent to G$_{\mu+\xi}$.

In fact, full Separation holds, even though the separation terms are only for $\Delta_{0}$ formulas. The first guess as to how to proceed is as follows. Given $a$ and $\phi$ a term and formula in (resp. over) {\bf M}$_{\kappa}$, let $\lambda > \kappa$ include the domains of $a$ and of $\phi$'s parameters. Let $\mu > \lambda$ be such that {\bf V}$_{\mu}$[G] contains $a$, $\phi$'s parameters, and is a $\Sigma_{n}$ elementary substructure of {\bf V}[G] for n sufficiently large (i.e. large enough to reflect the truth of $\phi(x)$ in {\bf M}). Let f be the function with domain $\mu \backslash \kappa$ such that f($\nu$) = \{ g $\in$ {\bf V}$_{\mu} \mid$ g is a term naming an element of {\bf M}$_{\nu}$ \}. The notation ``$\phi^f$" means ``$\phi$ with all of the unbounded variables bounded by f". By the choice of $\mu$, $\phi(x)$ is interpreted in {\bf M} (in {\bf V}[G]) just as $\phi(x)$ would be in {\bf V}$_{\mu}$[G]'s version of {\bf M}. If in turn that could be captured as Sep$_{a,\phi^f,\mu}$ (in {\bf V}[G]), then we'd be done.

The problem is that, while f reflects the restriction to {\bf M}$^{\mu}$, it cannot reflect the restriction of {\it P} from ORD to $\mu$. To give a concrete example, let $\phi$ be the sentence $``\forall x \; \forall y \; x=y \vee x\not=y"$. In {\bf M}, this sentence is never true, and so any separation set for $\phi$ should be the empty set. However, for any choice of f, $\phi^f$ eventually becomes true (at latest beyond f's domain), so Sep$_{a,\phi^f,\mu}$ eventually becomes $a$.

We need some way of saying ``don't look beyond this node." This can be done with a technique developed in its essence by Friedman and Scedrov. In \cite{FS}, they consider a particular formula $\psi$ (in the notation to be introduced) and a particular Kripke model; moreover, their model was a two-node model, so their {\bf M}$_{pre-\psi}$ (see below) was a classical and not even a Kripke structure. Nonetheless, their work is easily generalized to all formulas and all models. To have this generalization shown, and to keep this paper self-contained, we develop the details.

Let $\psi$ be a sentence, possibly with parameters from the universe at the bottom node $\bot$ in some Kripke model {\bf M}. Using $\psi$, a transformation on the formulas can be defined inductively. If $\phi$ is atomic (including $\bot$, the always false statement), then $\phi^{*}$ is $\phi \vee \psi$. The inductive clauses are transparent to this construction: $(\phi_{0} \rightarrow \phi_{1})^{*} = (\phi_{0}^{*} \rightarrow \phi_{1}^{*}), \forall x \; \phi(x) = \forall x \; \phi^{*}(x),$ etc. It is easy to see inductively on $\phi$ that for all formulas $\phi$ and all nodes $\sigma$ with $\sigma \models \psi, \sigma \models \phi^{*}$. Let {\bf M}$_{pre-\psi}$ be {\bf M} with all the nodes where $\psi$ holds thrown out. That is, {\bf M}$_{pre-\psi}$'s underlying partial order is the initial sub-order of {\bf M}'s {\it P} consisting of $\{ \sigma | \sigma \not \models \psi \}$ =$_{def}$ {\it P}$_{pre-\psi}$ (which is assumed to be non-empty, i.e. $\bot \not \models \psi$). The universe of {\bf M}$_{pre-\psi}$ at $\sigma$ is the same as for {\bf M}, the transition functions are the same, the atomic relations and functions have the same interpretation. Because nodes in ${\bf M}_{pre-\psi}$ are also nodes in {\bf M}, we will distinguish notationally truth in ${\bf M}_{pre-\psi}$ from truth in {\bf M} with $\models_{{\bf M}_{pre-\psi}}$ versus $\models_{{\bf M}}$.

\begin{lemma}
$\sigma \models_{{\bf M}} \phi^{*}$ iff $\sigma \models_{{\bf M}_{pre-\psi}} \phi$.
\end{lemma}
\begin{proof}
By induction on $\phi$.
\begin{itemize}

\item $\phi$ atomic: $\sigma \models_{{\bf M}} \phi^{*}$ iff $\sigma \models_{{\bf M}} \phi \vee \psi$. Since by hypothesis $\sigma \not \models_{{\bf M}} \psi$, this is equivalent to $\sigma \models_{{\bf M}} \phi$, which is itself equivalent to $\sigma \models_{{\bf M}_{pre-\psi}} \phi$, since {\bf M}$_{pre-\psi}$ is a restriction of {\bf M} and the atomic relations are defined locally. (Although this lemma is very general, the reader can be excused for thinking of models of set theory, and ask whether equality as defined by ``same members" really holds. Note we do not claim that Extensionality is preserved! The equality in {\bf M} is taken as primitive.)

\item $\phi_{0} \vee \phi_{1}$: $\sigma \models_{{\bf M}} \phi^{*}_{0} \vee \phi^{*}_{1}$ iff\\
$\sigma \models_{{\bf M}} \phi^{*}_{0}$ or $\sigma \models_{{\bf M}} \phi^{*}_{1}$ iff (inductively)\\
$\sigma \models_{{\bf M}_{pre-\psi}} \phi_{0}$ or $\sigma \models_{{\bf M}_{pre-\psi}} \phi_{1}$ iff \\
$\sigma \models_{{\bf M}_{pre-\psi}} \phi_{0} \vee \phi_{1}$ .

\item $\phi_{0} \wedge \phi_{1}$: Analogous to $\vee$.

\item $\phi_{0} \rightarrow \phi_{1}$: It has already been observed that:\\
for $\tau \in {\it P} \backslash {\it P}_{pre-\psi} \tau \models_{{\bf M}} \phi^{*}_{0} \wedge \phi^{*}_{1}.$\\
So for $\sigma \in$ {\it P}$_{pre-\psi}$, $\sigma \models_{{\bf M}} \phi^{*}_{0} \rightarrow \phi^{*}_{1}$ reduces to:\\
for all extensions $\tau \in$ {\it P}$_{pre-\psi}$ of $\sigma$, if $\tau \models_{{\bf M}} \phi^{*}_{0}$ then $\tau \models_{{\bf M}} \phi^{*}_{1}$.\\
Inductively, this is equivalent to:\\
for all extensions $\tau \in$ {\it P}$_{pre-\psi}$ of $\sigma$, if $\tau \models_{{\bf M}_{pre-\psi}} \phi_{0}$ then $\tau \models_{{\bf M}_{pre-\psi}} \phi_{1}$, i.e.
$\sigma \models_{{\bf M}_{pre-\psi}} \phi_{0} \rightarrow \phi_{1}.$

\item $\forall x \; \phi(x)$: It has already been observed that:\\
for $\tau \in {\it P} \backslash {\it P}_{pre-\psi}$ and $x \in {\bf M}_{\tau},  \tau \models_{{\bf M}} \phi^{*}(x)$.\\
So for $\sigma \in$ {\it P}$_{pre-\psi}$, $\sigma \models_{{\bf M}} \forall x \; \phi^{*}(x)$ reduces to:\\
for all extensions $\tau \in$ {\it P}$_{pre-\psi}$ of $\sigma$ and $x \in {\bf M}_{\tau},  \tau \models_{{\bf M}} \phi^{*}(x)$.\\
Inductively, this is equivalent to:\\
for all extensions $\tau \in$ {\it P}$_{pre-\psi}$ of $\sigma$, and $x \in {\bf M}_{\tau},  \tau \models_{{\bf M}_{pre-\psi}} \phi(x),$\\
i.e. $\sigma \models_{{\bf M}_{pre-\psi}} \forall x \; \phi(x).$

\item $\exists x \; \phi(x)$: $\sigma \models_{{\bf M}} \exists x \; \phi^{*}(x)$ iff\\
for some $x \in {\bf M}_{\sigma} \; \sigma \models_{{\bf M}} \phi^{*}(x)$ iff (inductively)\\
for some $x \in {\bf M}_{\sigma} \; \sigma \models_{{\bf M}_{pre-\psi}} \phi(x)$ iff\\
$\sigma \models_{{\bf M}_{pre-\psi}} \exists x \; \phi(x)$.
\end{itemize}
\end{proof}

To return to the issue at hand, let $\psi$ be the sentence ``0 $\in 1_{\mu}$", where $\mu$ is as above. (Recall that $1_{\mu}$ is defined so that $\psi$ is true exactly at nodes $\mu$ and beyond.) This choice of $\psi$ determines a *-transformation on formulas. Then Sep$_{a,\phi^{f*},\mu}$ (equivalently, Sep$_{a,\phi^{*f},\mu}$) is as desired. That is, we claim that
$$\kappa \models \forall x \; (x \in Sep_{a,\phi^{*f},\mu} \leftrightarrow (x \in a \wedge \phi(x))).
$$
(In what follows, we suppress mention of the transition functions f$_{\kappa\lambda}$, for purposes of readability, preferring instead to write the incorrect ``$\lambda \models x \in a$" instead of ``$\lambda \models x \in$ f$_{\kappa\lambda}(a)$", to give only a mild example of what the reader would otherwise be in for.)

First suppose that $\kappa \leq \lambda < \mu$ and $x \in$ {\bf M}$_{\lambda}$. (In this paragraph, we suppress mention of the transition functions f$_{\kappa\lambda}$, for purposes of readability, preferring instead to write the incorrect ``$\lambda \models x \in a$" instead of ``$\lambda \models x \in$ f$_{\kappa\lambda}(a)$", to give only a mild example of what the reader would otherwise be in for.) If $x \not\in$ {\bf M}$^{\mu}$ then $x$ gets into neither $a$ nor Sep$_{a,\phi^{*f},\mu}$, and we're done. Otherwise, by the choice of $\mu$,
$$\lambda \models ``x \in a \; \wedge \; \phi(x)" \; iff \; {\bf V}_{\mu}[G] \models ``\lambda \models ``x \in a \; \wedge \phi(x)" ".$$
The difference between {\bf M} (in {\bf V}[G]) and {\bf V}$_{\mu}$[G]'s version thereof is the partial order (ORD vs. $\mu$) and the ordinal length of the inductive generation of terms (again, ORD vs. $\mu$). The former is captured by $\psi$, the latter by f; notationally, {\bf M} as interpreted in {\bf V}$_{\mu}$[G] is {\bf M}$^{\mu}_{pre-\psi}$. So
$${\bf V}_{\mu}[G] \models ``\lambda \models ``x \in a \; \wedge \phi(x)" " \; iff \;
\lambda \models_{{\bf M}_{pre-\psi}} ``x \in a \; \wedge \phi^f(x)".$$
By the previous lemma, the latter is equivalent to $\lambda \models ``x \in a \; \wedge \phi^{f*}(x)".$ By the interpretation of Sep-terms, the latter is equivalent to $\lambda \models ``x \in$ Sep$_{a,\phi^{f*},\mu}"$.

Now consider the case $\lambda \geq \mu$, $x \in {\bf M}_{\lambda}$. Since $\lambda \geq \mu >$ dom($a$), if $\lambda \models ``x \in a"$ then there are $\nu < \mu$ and $y \in {\bf M}_{\nu}$ such that $x$ = f$_{\nu\mu}(y)$ and $\nu \models ``y \in a"$. Withough loss of generality $\nu$ can be chosen to be larger than the domains of $a$ and $\phi$'s parameters. For any such $\nu$ and $y$, by the same isomorphism argument as in the last section, $\nu \models \phi(y)$ iff $\lambda \models \phi(x)$. Furthermore, since $a \in {\bf V}_{\mu}$[G] (by the choice of $\mu$), $y$ is not only in {\bf M}$_{\nu}$ but also {\bf M}$_{\nu}^{\mu}$. Combining all of this with the definition of f$_{\kappa\lambda}$(Sep$_{a,\phi^{f*},\mu}$) (which was highlighted just before the statement of the current theorem), we get the following equivalences:

$\lambda \models ``x \in a \; \wedge \; \phi(x)" \longleftrightarrow$

$\exists \nu < \mu, \; y \in {\bf M}_{\nu}^{\mu} \; (x = f_{\nu\lambda}(y) \; \wedge \; \nu \models ``y \in a \; \wedge \; \phi(y)") \longleftrightarrow$

(by the case above: $\kappa \leq \lambda < \mu$, with $\nu$ here playing the role of $\lambda$)

$\exists \nu < \mu, \; y \in {\bf M}_{\nu}^{\mu} \; (x = f_{\nu\lambda}(y) \; \wedge \; \nu \models ``y \in$ Sep$_{a,\phi^{f*},\mu}") \longleftrightarrow$

$\exists \nu < \mu, \; y \in {\bf M}_{\nu} \; (x = f_{\nu\lambda}(y) \; \wedge \; \nu \models ``y \in$ Sep$_{a,\phi^{f*},\mu}") \longleftrightarrow$

$\lambda \models ``x \in$ Sep$_{a,\phi^{f*},\mu}"$.

Strong Collection: In the presence of full Separation (see last paragraph), Strong Collection follows from Collection. Suppose $\kappa \models \forall x \in a \; \exists y \; \phi(x,y)$. Let $\lambda, \mu$ and f be as in the last paragraph. Then $\kappa \models$ ``f is a bounding set for $\phi$ on $a$".

Exponentiation: In order to highlight exactly why the following argument works for functions (yielding Exponentiation) but not for arbitrary relations (hence not yielding Subset Collection), it will be presented in terms of a relation R until exactly that moment when we need to use that R is not only a relation but also a function.

So let $a$ and $b$ be terms for sets in {\bf M}$_{\kappa}$, and p a forcing condition, R a term, and $\lambda \geq \kappa$ an ordinal so that p $\mid\vdash ``\lambda \models$ R is a relation from $a$ to $b$". Since the property of being a relation between two given sets is $\Delta_{0}$, the assertion p $\mid\vdash ``\lambda \models$ R is a relation from $a$ to $b$" is $\Delta_{1}$ in {\bf V}. We will use later the same result for the property of R being a function from $a$ to $b$.

Let H be the $\Sigma_{1}$-Skolem hull of \{$a$, $b$, $\kappa$, $\lambda$, R\} $\cup$ TC($a$) $\cup$ TC($b$) $\cup \; \kappa$, of the same size as that set (where ``TC" stands for transitive closure). Let $\rho$ be the transitive collapsing function. Then
$$\rho"H \models ``\rho(p) \mid\vdash ``\rho(\lambda) \models \rho(R) \; is \; a \; relation \; from \; \rho(a) \; to \; \rho(b)"".
$$
$\rho$ fixes p, $a$, and $b$ because L$_{\omega}$, TC($a$), and TC($b$) $\subseteq$ H. Furthermore, $\rho(\lambda) \leq \lambda$, so for some $\mu \leq \lambda$
$$\rho"H \models ``p \mid\vdash ``\mu \models \rho(R) \; is \; a \; relation \; from \; a \; to \; b"",
$$
and, by the same reasoning, for all q $\geq$ p,
$$H \models ``q \mid\vdash ``\lambda \models \langle x_{a}, y_{b} \rangle \in R"" \; iff \; \rho"H \models ``q \mid\vdash ``\mu \models \langle x_{a}, y_{b} \rangle \in \rho(R)"".
$$
However, the assertion ``q $\mid\vdash ``\mu \models \langle x_{a}, y_{b} \rangle \in \rho(R)""$ is $\Delta_{1}$ over both {\bf V} and $\rho$"H (as the collapse of a $\Sigma_{1}$ substructure of {\bf V}), hence is absolute (between $\rho$"H and {\bf V}), as is $``q \mid\vdash ``\lambda \models \langle x_{a}, y_{b} \rangle \in R""$ between H and {\bf V}. Therefore
$$q \mid\vdash ``\lambda \models \langle x_{a}, y_{b} \rangle \in R" \; iff \; q \mid\vdash ``\mu \models \langle x_{a}, y_{b} \rangle \in \rho(R)".
$$
Since $\mu \leq \lambda$
$$if \; q \mid\vdash ``\mu \models \langle x_{a}, y_{b} \rangle \in \rho(R)" \; then \; q \mid\vdash ``\lambda \models \langle x_{a}, y_{b} \rangle \in \rho(R)".
$$
Combining the last two assertions,
$$if \; q \mid\vdash ``\lambda \models \langle x_{a}, y_{b} \rangle \in R" \; then \; q \mid\vdash ``\lambda \models \langle x_{a}, y_{b} \rangle \in \rho(R)",
$$
i.e. p $\mid\vdash ``\lambda \models$ R $\subseteq$ $\rho$(R)".

Under the assumption merely that R is a relation, no more can be said. If, however, p $\mid\vdash ``\lambda \models$ R is a (total) function (from $a$ to $b$)", then, by analogous arguments to those above,

H $\models$ ``p $\mid\vdash ``\lambda \models$ R is a function"",

$\rho$(H) $\models$ ``p $\mid\vdash ``\mu \models$ $\rho$(R) is a function"",

p $\mid\vdash ``\mu \models$ $\rho$(R) is a function", and

p $\mid\vdash ``\lambda \models$ $\rho$(R) is a function".

Since a (total) function cannot be properly extended to a function, p $\mid\vdash ``R = \rho(R)"$.

By the cardinality constraint on H, there is an ordinal $\alpha$ (independent of $\lambda$ and R) such that $\rho$"H $\subseteq$ {\bf V}$_{\alpha}$. In particular, $\rho$(R) $\in$ {\bf V}$_{\alpha}$. Let g be the function with domain $\alpha \backslash \kappa$ such that g($\nu$) = \{ h $\in$ {\bf V}$_{\alpha} \mid$ h is a term naming an element of {\bf M}$_{\nu}$ \}. Then g is a bounding set for $^{a}b$. By Separation (Restricted Separation suffices here), $^{a}b$ is a set.

$\neg$ Subset Collection: Suppose that $\kappa \models$ ``for all total relations R on {\bf N} there is a total subrelation R', R' $\in$ F". There is an ordinal $\lambda$ such that, for all terms G$_{\mu}$ in the transitive closure of F, $\mu < \lambda$; without loss of generality, $\lambda > \kappa$. Since for every such $\mu$ $\lambda \models$ G$_{\mu}$ = {\bf N} $\times$ {\bf N}, f$_{\kappa\lambda}$(F) can be evaluated in {\bf V}. By assumption, $\lambda \models$ ``there is a total subrelation of G$_{\lambda}$ in f$_{\kappa\lambda}$(F)". This is, however, impossible, by the genericity of G, as follows. Let $\lambda \models$ ``R' $\in$ f$_{\kappa\lambda}$(F)". In {\bf V}, R'($\lambda$) can be identified with a total relation on {\bf N} (and so we will use the same notation R'($\lambda$) for this relation in {\bf V}). Let p be a forcing condition. Since p is finite and R'($\lambda$) total, let $\langle m, n \rangle \in$ R'($\lambda$) be such that $\langle m, n \rangle \not\in$ dom(p), and let q be p $\cup \langle \langle m, n \rangle, 0 \rangle$. q $\mid\vdash$ ``R'($\lambda$) is not a subrelation of G", so q $\mid\vdash ``\lambda \not\models$ ``R' is a subrelation of G$_{\lambda}$"". Hence the set of conditions forcing $\lambda \not\models$ ``R' is a subrelation of G$_{\lambda}$" is dense, and so in {\bf V}[G] $\lambda \not\models$ ``R' is a subrelation of G$_{\lambda}$". This however contradicts the construction of $\lambda$, so the assumption on $\kappa$ is false, and Subset Collection fails for relations on {\bf N}.
\end{proof}

\section{Reflection}
Although this article is about Power Set and its variants, a result about Collection and its variants is right there, and so should be pointed out. Classically, over the other axioms of ZF, Reflection, Collection, and Replacement are equivalent. The proofs that Reflection implies Collection, which in turn implies Replacement are soft and easy proofs, and so carry over verbatim to intuitionistic logic. The reverse implications are less obvious and make essential use of classical logic; the standard proofs do not carry over. In fact, it was shown in \cite{FS} that Replacement does not imply Collection over the remaining axioms of IZF. It is natural to ask whether the same can be said of Collection and Reflection. Although this is still unknown, the first construction of this paper provides a model of CZF + Separation + $\neg$ Reflection. The only axiom that's missing from IZF is Power Set, and Subset Collection is at least a partial substitute.

Moreover, the failure of Reflection is very strong. All true $\Pi_{1}$ facts are reflected in any transitive set, and any true $\Sigma_{1}$ sentence is reflected in any transitive set that contains a witness. In contrast, consider $\neg \forall x \; \forall y \; (x=y \vee x\not=y)$. This is true in the whole model, and false in every set.

\end{document}